\providecommand{\U}[1]{\protect\rule{.1in}{.1in}}
\newtheorem{theorem}{Theorem}
\newtheorem{remark}[theorem]{Remark}
\newtheorem{definition}[theorem]{Definition}
\newenvironment{proof}[1][Proof]{\noindent \textbf{#1.} }{\hspace{16cm} \rule{0.5em}{0.5em}}
\begin{document}
	
	\title{\textbf{The restricted minimum density power divergence estimator for non-destructive one-shot device testing the under step-stress model with exponential lifetimes  }}
	\author{Narayanaswamy Balakrishnan,  María Jaenada and Leandro Pardo}
	\date{ }
	\maketitle
	
	\begin{abstract}
	
		One-shot devices data represent an extreme case of interval censoring.
		Some kind of one-shot units do not get destroyed when tested, and so, survival units can continue within the test providing extra information about their lifetime.
		Moreover, one-shot devices may last for long times under normal operating conditions, and so accelerated life tests (ALTs) may be used for inference.
		ALTs relate the lifetime distribution of an unit with the stress level at which it is tested via log-linear relationship.
		Then, mean lifetime of the devices are reduced during the test by increasing the stress level and inference results on increased stress levels can be easily extrapolated to normal operating conditions.
		In particular, the step-stress ALT model increases the stress level at pre-fixed times gradually during the life-testing experiment, which may be specially advantageous for non-destructive one-shot devices.
		However, when the number of units under test are few, outlying data may greatly influence the parameter estimation.
		In this paper, we develop robust restricted estimators based on the density power divergence (DPD) under linearly restricted subspaces, for non-destructive one-shot devices under the step-stress ALTs with exponential lifetime distributions. We theoretically study the asymptotic and robustness properties of the restricted estimators and we empirically illustrate such properties through a simulation study.
	\end{abstract}

	\textbf{Keywords} : Accelerated lifetests; Exponential lifetime distributions; One-shot devices;  Restricted Minimum Density Power Divergence Estimator.
	\section{Introduction}
	
	One-shot devices, an extreme case of interval censoring, play an important role in survival analysis. One-shot devices can be only tested at some discrete inspections times, so we can only know if a test unit have failed or not at certain fixed times.
	Real-life one-shot devices usually have large mean lifetimes under normal operating conditions, and so accelerated life tests (ALTs) plans may be useful to infer on their reliability. 
	ALTs plans assume that the mean lifetime of the devices is related to the stress level at which units are tested, and therefore they
	accelerate the time to failure by increasing the stress level.
	
	Generally, one-shot devices are assumed to get destroyed when tested, and so one-shot data are right and left censoring. 
	However, the non-destructiveness assumption may not be necessary in many practical situations. In this paper, we focus on these non-destructive one-shot devices and we study some inference methods for analyzing their lifetime characteristics.
	The non-destructiveness condition allows surviving units to continue in the experiment, providing extra information about their lifetime distribution.
	
	In this context,  step-stress ALTs, which increase the stress level progressively during the experiment  at certain pre-specified times (known as times of stress change), make the best use of the non-destructive devices under tests.
	Here, we assume that the lifetime distribution of the one-shot device at one  stress level is related to the distribution at preceding stress levels by
	assuming the residual life of the device depends only on the cumulative exposure it had experienced,	with no memory of how this exposure was accumulated.
	We consider a multiple step-stress ALT with $k$ ordered stress levels, $x_1< x_2 < \dots < x_k$ and  their corresponding times of stress change $\tau_1 < \tau_2 \dots < \tau_k.$ 
	We assume the lifetimes of one-shot devices follows an exponential distribution, which is widely used as a simple lifetime model in engineering and physical sciences. 
	The cumulative exposure model describes the lifetime distribution of a device as
	 \begin{equation}\label{eq:distributionT}
	 	G_T(t) = 
	 	\begin{cases}
	 		G_1(t) = 1-e^{-\lambda_1 t}, & 0<t< \tau_1\\
	 		G_2\left(t + a_1 -\tau_1 \right)  = 1-e^{-\lambda_2(t + a_1 -\tau_1)}, & \tau_1 \leq t < \tau_2 \\
	 		\vdots  & \vdots \\
	 		G_k\left(t + a_{k-1} -\tau_{k-1} \right)  =  1-e^{-\lambda_k(t + a_{k-1} -\tau_{k-1})}, & \tau_{k-1} \leq t < \infty ,\\
	 	\end{cases}
	 \end{equation}
	 with 
	 \begin{equation}\label{eq:ai}
	 	a_{i-1} = \frac{ \sum_{l=1}^{i-1}\left(\tau_l-\tau_{l-1}\right)\lambda_l}{\lambda_i}, \hspace{0.3cm} i=1,...,k-1.
	 \end{equation}
	 and
	 \begin{equation} \label{eq:loglinear}
	 	\lambda_i(\boldsymbol{\theta}) = \theta_0 \exp(\theta_1 x_i), \hspace{0.3cm} i = 1,..,k,
	 \end{equation} 
	 where $\boldsymbol{\theta} = (\theta_0, \theta_1) \in \mathbb{R}^+ \times \mathbb{R} = \Theta$ is an unknown parameter vector of the model. 
	 The log-linear relation in (\ref{eq:loglinear}) is frequently assumed in accelerated life test models, as it can be shown to be equivalent to  the well-known inverse power law model or the Arrhenius reaction rate model.
	 
	Now, let consider a grid of inspection times, $t_1 < t_2 < \dots < t_L$, containing all times of stress change. The probability of failure within the interval $(t_{j-1}, t_{j}]$ is given by
	\begin{equation} \label{eq:th.prob}
		\pi_j(\boldsymbol{\theta}) =  G_T(t_j) - G_T(t_{j-1}), \hspace{0.3cm} j = 1,..,L,
	\end{equation}
	and the probability of survival at the end of the experiment is $\pi_{L+1}(\boldsymbol{\theta}) = 1 - G_T(t_L).$ 
	
 	Classical inferential methods for one-shot are based on the maximum likelihood estimator (MLE), which is very efficient by it lacks of robustness.
 	To overcome the robustness drawback, Balakrishnan et al. (2022) proposed robust estimators for one-shot devices based on the popular density power divergence (DPD) (Basu et al. 1998) under exponential lifetimes.
 	They developed minimum DPD estimators (MDPPE) as well as Wald-type test based on them, and studied theoretically and empirically their asymptotic and robustness properties.
 	
 	On the other hand, some inferential procedures, as Rao-type tests are based on restricted estimators.
 	 Basu et al. (2018) developed robust restricted estimators based on the DPD for general statistical models, and derived their asymptotic distribution and robustness properties. Jaenada et al. (2022) extended the theory using the Rényi pseudistance and developed some testing procedures based on the restricted estimators.
 	
 	
 	In this paper, we develop restricted MDPDE under linearly constrained subspaces for non-destructive one-shot devices tested under step-stress ALT.  In Section 2 we define the restricted MDPDE, and we state its asymptotic distribution. Section 3 theoretically analyzes the robustness of the restricted MDPDEs through its Influence Function (IF).
 	Finally, in Section 4 a simulation study is carried out to evaluate the performance of the proposed estimators under different scenarios of contamination.
	
	\section{Minimum density power divergence estimator and Restricted minimum density power divergence estimator \label{sec:modelformulation}}
	
	The density DPD family represents a rich class of density based divergences.
	It is indexed by a tuning parameter $\beta \geq 0$ controlling the trade-off between robustness and efficiency. 
	Let consider $(n_1,...,n_{L+1})$ a sample of one-shot data. The empirical probability vector of a multinomial model can be defined as 
	\begin{equation} \label{eq:emp.prob}
		\widehat{\boldsymbol{p}} = (\frac{n_1}{N},...,\frac{n_{L+1}}{N}).
	\end{equation}
	For step-stress ALT for one-shot devices under exponential lifetime distributions, the DPD between the 
	the empirical and theoretical probability vectors, defined in (\ref{eq:th.prob}) and (\ref{eq:emp.prob}), respectively,  is given by
	\begin{equation}\label{eq:DPDloss}
		d_{\beta}\left( \widehat{\boldsymbol{p}},\boldsymbol{\pi}\left(\boldsymbol{\theta}\right)\right)   = \sum_{j=1}^{L+1} \left(\pi_j(\boldsymbol{\theta})^{1+\beta} -\left( 1+\frac{1}{\beta}\right) \widehat{p}_j\pi_j(\boldsymbol{\theta})^{\beta}  +\frac{1}{\beta} \widehat{p}_j^{\beta+1} \right),
	\end{equation}
	From the above, we define the MDPDE for the step-stress ALT model with one-shot devices as
	\begin{equation}
		\boldsymbol{\widehat{\theta}}^{\beta} = \left(\widehat{\theta}_0^{\beta},\widehat{\theta}_{1}^{\beta}\right) = \operatorname{arg} \operatorname{min}_{\boldsymbol{\theta}  \in \Theta} d_{\beta} \left( \widehat{\boldsymbol{p}},\boldsymbol{\pi}\left(\boldsymbol{\theta}\right)\right).
	\end{equation}
	Note that, at $\beta = 0$, the DPD coincides with the Kullback-Leibler divergence and so the MDPDE for $\beta=0$ coincides with the MLE.
	
	In many practical situations it may be of interest to reduce the parameter space to values of $\boldsymbol{\theta}$ satisfying a linear constraint of the form
	\begin{equation}\label{eq:restriction}
		g(\boldsymbol{\theta}) = \boldsymbol{m}^T\boldsymbol{\theta}-d = 0,
	\end{equation}
	with $\boldsymbol{m} = (m_0,m_1)^T \in \mathbb{R}^2$ and $d\in \mathbb{R}.$ 
		Accordingly, the restricted MDPDE, $\widetilde{\boldsymbol{\theta}}^\beta$ is defined by
		\begin{equation}\label{RMDPDE}
			\widetilde{\boldsymbol{\theta}}^\beta = \operatorname{arg} \operatorname{min}_{\boldsymbol{\theta \in \Theta_0}} d_\beta(\widehat{\boldsymbol{p}}, \boldsymbol{\pi}(\boldsymbol{\theta})).
		\end{equation}
		
	Since the restricted MDPPE is a constrained minimum, its estimating equations can be written in terms of Lagrange multipliers. That is,
	the MDPDE restricted to the the linear constraint (\ref{eq:restriction}), $\widetilde{\boldsymbol{\theta}}^\beta,$ must satisfy the restricted equations
	\begin{equation}\label{eq:estimatingRMDPDE}
		\boldsymbol{W}^T\boldsymbol{D}_{\boldsymbol{\pi}(\widetilde{\boldsymbol{\theta}}^\beta)}^{\beta-1}\left( \widehat{\boldsymbol{p}}- \boldsymbol{\pi}(\widetilde{\boldsymbol{\theta}}^\beta)\right)+ \boldsymbol{m}\widetilde{\boldsymbol{\lambda}} = \boldsymbol{0}_2,
	\end{equation}
	for some vector $\widetilde{\boldsymbol{\lambda}}$ of Lagrangian multipliers,
	where $\boldsymbol{0}_2$ is the 2-dimensional null vector, $\boldsymbol{D}_{\boldsymbol{\pi}(\boldsymbol{\theta})}$ denotes a $(L+1)\times(L+1)$ diagonal matrix with diagonal entries $\pi_j(\boldsymbol{\theta}),$ $j=1,...,L+1,$ and $\boldsymbol{W}$ is a $(L+1) \times 2$ matrix with rows
	$
	\boldsymbol{w}_j =  \boldsymbol{z}_j-\boldsymbol{z}_{j-1},
	$
	where
	\begin{align}
		\label{eq:zj} \boldsymbol{z}_j  &= g_T(t_j)\begin{pmatrix}
			\frac{t_j+a_{i-1}-\tau_{i-1}}{\theta_0}\\
			(t_j+a_{i-1}-\tau_{i-1})x_i + a_{i-1}^\ast
		\end{pmatrix}, \hspace{0.3cm} j = 1,...,L,\\ 
		a_{i-1}^\ast &= \frac{1}{\lambda_{i}} \sum_{l=1}^{i-1}\lambda_l\left(\tau_l-\tau_{l-1}\right)(-x_{i}+x_l) , \hspace{0.3cm} i=2,..,k,\label{aast}
	\end{align}
	$ \boldsymbol{z}_{-1} = \boldsymbol{z}_{L+1} = \boldsymbol{0}$ and $i$ is the stress level at which the units are tested after the $j-$th inspection time.
	
	The next theorem states the asymptotic distribution of the restricted MDPDE for non-destructive one-shot devices under the step-stress ALT model.
	
	\begin{theorem}
		Let $\boldsymbol{\theta}_0$ be the true value of the parameter $\boldsymbol{\theta}$ and assume that $g(\boldsymbol{\theta}_0) = 0$ with $g(\cdot)$ defined in (\ref{eq:restriction}). 
		The asymptotic distribution of the restricted MDPDE for the step-stress ALT model under exponential lifetimes, $\tilde{\boldsymbol{\theta}}^\beta,$ obtained under the constraint $g(\boldsymbol{\theta})=0,$ is given by
		$$\sqrt{N}\left(\widetilde{\boldsymbol{\theta}}^\beta-\boldsymbol{\theta}_0 \right) \xrightarrow[N \rightarrow \infty]{L}\mathcal{N}\left(\boldsymbol{0}, \Sigma_\beta(\boldsymbol{\theta}_0)\right)$$
		where 
		\begin{equation}\label{eq:SigmaPQmatrices}
			\begin{aligned}
				\Sigma_\beta(\boldsymbol{\theta}_0) &= \boldsymbol{P}_\beta(\boldsymbol{\theta}_0) \boldsymbol{K}_\beta(\boldsymbol{\theta}_0) \boldsymbol{P}_\beta(\boldsymbol{\theta}_0),\\
				\boldsymbol{P}_\beta(\boldsymbol{\theta}_0) &=  \boldsymbol{J}_\beta(\boldsymbol{\theta}_0)^{-1}- \boldsymbol{Q}_\beta(\boldsymbol{\theta}_0)\boldsymbol{m}^T \boldsymbol{J}_\beta(\boldsymbol{\theta}_0)^{-1},\\
				\boldsymbol{Q}_\beta(\boldsymbol{\theta}_0) &=  \boldsymbol{J}_\beta(\boldsymbol{\theta}_0)^{-1}\boldsymbol{m}( \boldsymbol{m}^T \boldsymbol{J}_\beta(\boldsymbol{\theta}_0)^{-1}\boldsymbol{m})^{-1},
			\end{aligned}
		\end{equation} 
		with \begin{equation} \label{eq:JK}
			\boldsymbol{J}_\beta(\boldsymbol{\theta}_0) = \boldsymbol{W}^T D_{\boldsymbol{\pi}(\boldsymbol{\theta_0})}^{\beta-1} \boldsymbol{W},
			\hspace{0.3cm}  \hspace{0.3cm}
			\boldsymbol{K}_\beta(\boldsymbol{\theta}_0) = \boldsymbol{W}^T \left(D_{\boldsymbol{\pi}(\boldsymbol{\theta_0})}^{2\beta-1}-\boldsymbol{\pi}(\boldsymbol{\theta}_0)^{\beta}\boldsymbol{\pi}(\boldsymbol{\theta}_0)^{\beta T}\right) \boldsymbol{W},
		\end{equation}
		$D_{\boldsymbol{\pi}(\boldsymbol{\theta_0})}$ denotes the diagonal matrix with entries $\pi_j(\boldsymbol{\theta_0}),$ $j=1,...,L+1,$ and $\boldsymbol{\pi}(\boldsymbol{\theta}_0)^{\beta}$ denotes the vector with components $\pi_j(\boldsymbol{\theta}_0)^{\beta}.$
		
	\end{theorem}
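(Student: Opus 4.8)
\noindent The plan is to follow the classical route for constrained $M$-estimation: linearise the restricted estimating equation (\ref{eq:estimatingRMDPDE}) together with the linear constraint (\ref{eq:restriction}) around the true value $\boldsymbol{\theta}_0$, and feed into this linearisation the multinomial central limit theorem for the empirical probability vector $\widehat{\boldsymbol{p}}$. The heavy algebra is the translation, carried out earlier in the excerpt, between the multinomial-model quantities and the model-specific matrices $\boldsymbol{J}_\beta$, $\boldsymbol{K}_\beta$, $\boldsymbol{P}_\beta$, $\boldsymbol{Q}_\beta$; once that dictionary is in place the argument is essentially the one of Basu et al. (2018) for general restricted minimum DPD estimators specialised to this setting.

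First I would record the base limit. Since $(n_1,\dots,n_{L+1})$ is multinomial with cell probabilities $\boldsymbol{\pi}(\boldsymbol{\theta}_0)$, the central limit theorem gives $\sqrt{N}(\widehat{\boldsymbol{p}}-\boldsymbol{\pi}(\boldsymbol{\theta}_0)) \xrightarrow[N\rightarrow\infty]{L} \mathcal{N}(\boldsymbol{0},\, \boldsymbol{D}_{\boldsymbol{\pi}(\boldsymbol{\theta}_0)}-\boldsymbol{\pi}(\boldsymbol{\theta}_0)\boldsymbol{\pi}(\boldsymbol{\theta}_0)^T)$. Writing $\boldsymbol{U}_N(\boldsymbol{\theta}) = \boldsymbol{W}(\boldsymbol{\theta})^T\boldsymbol{D}_{\boldsymbol{\pi}(\boldsymbol{\theta})}^{\beta-1}(\widehat{\boldsymbol{p}}-\boldsymbol{\pi}(\boldsymbol{\theta}))$ for the statistic appearing in (\ref{eq:estimatingRMDPDE}), and using the elementary identity $\boldsymbol{D}_{\boldsymbol{\pi}(\boldsymbol{\theta}_0)}^{\beta-1}\boldsymbol{\pi}(\boldsymbol{\theta}_0) = \boldsymbol{\pi}(\boldsymbol{\theta}_0)^{\beta}$, a direct application of the delta method yields
$$\sqrt{N}\,\boldsymbol{U}_N(\boldsymbol{\theta}_0) \xrightarrow[N\rightarrow\infty]{L} \mathcal{N}\!\left(\boldsymbol{0},\, \boldsymbol{W}^T\left(\boldsymbol{D}_{\boldsymbol{\pi}(\boldsymbol{\theta}_0)}^{2\beta-1}-\boldsymbol{\pi}(\boldsymbol{\theta}_0)^{\beta}\boldsymbol{\pi}(\boldsymbol{\theta}_0)^{\beta T}\right)\boldsymbol{W}\right) = \mathcal{N}\!\left(\boldsymbol{0},\, \boldsymbol{K}_\beta(\boldsymbol{\theta}_0)\right),$$
with $\boldsymbol{K}_\beta(\boldsymbol{\theta}_0)$ exactly as in (\ref{eq:JK}).

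Next, working under the standard regularity conditions (in particular $\sqrt{N}$-consistency of the constrained minimiser $\widetilde{\boldsymbol{\theta}}^\beta$, which I would establish first by a routine argument, together with continuity and boundedness of the first two derivatives of $\boldsymbol{\pi}(\boldsymbol{\theta})$ near $\boldsymbol{\theta}_0$), I would Taylor-expand the left-hand side of (\ref{eq:estimatingRMDPDE}) about $\boldsymbol{\theta}_0$. Using $\partial\boldsymbol{\pi}(\boldsymbol{\theta})/\partial\boldsymbol{\theta}^T = \boldsymbol{W}(\boldsymbol{\theta})$ — which follows from (\ref{eq:zj}), since $\boldsymbol{w}_j = \boldsymbol{z}_j-\boldsymbol{z}_{j-1}$ and $\boldsymbol{z}_j = \partial G_T(t_j)/\partial\boldsymbol{\theta}$ — the Jacobian of $\boldsymbol{U}_N$ at $\boldsymbol{\theta}_0$ has leading term $-\boldsymbol{W}(\boldsymbol{\theta}_0)^T\boldsymbol{D}_{\boldsymbol{\pi}(\boldsymbol{\theta}_0)}^{\beta-1}\boldsymbol{W}(\boldsymbol{\theta}_0) = -\boldsymbol{J}_\beta(\boldsymbol{\theta}_0)$, coming from the derivative of the factor $\widehat{\boldsymbol{p}}-\boldsymbol{\pi}(\boldsymbol{\theta})$; the remaining terms, in which the derivative falls on $\boldsymbol{W}(\boldsymbol{\theta})$ or on $\boldsymbol{D}_{\boldsymbol{\pi}(\boldsymbol{\theta})}^{\beta-1}$, carry a factor $\widehat{\boldsymbol{p}}-\boldsymbol{\pi}(\boldsymbol{\theta}_0) = O_P(N^{-1/2})$ and hence are $o_P(1)$ once multiplied by $\sqrt{N}(\widetilde{\boldsymbol{\theta}}^\beta-\boldsymbol{\theta}_0) = O_P(1)$. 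Retaining the Lagrange term and appending the linearised constraint $\boldsymbol{m}^T(\widetilde{\boldsymbol{\theta}}^\beta-\boldsymbol{\theta}_0) = 0$ (exact here, since $g(\widetilde{\boldsymbol{\theta}}^\beta)=g(\boldsymbol{\theta}_0)=0$ and $g$ is linear), one gets the linear system
$$\boldsymbol{J}_\beta(\boldsymbol{\theta}_0)\sqrt{N}\left(\widetilde{\boldsymbol{\theta}}^\beta-\boldsymbol{\theta}_0\right) - \boldsymbol{m}\sqrt{N}\widetilde{\boldsymbol{\lambda}} = \sqrt{N}\,\boldsymbol{U}_N(\boldsymbol{\theta}_0) + o_P(1), \qquad \boldsymbol{m}^T\sqrt{N}\left(\widetilde{\boldsymbol{\theta}}^\beta-\boldsymbol{\theta}_0\right) = 0 .$$

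Finally I would solve this system. Eliminating $\sqrt{N}\widetilde{\boldsymbol{\lambda}}$ between the two equations gives
$$\sqrt{N}\left(\widetilde{\boldsymbol{\theta}}^\beta-\boldsymbol{\theta}_0\right) = \left(\boldsymbol{J}_\beta(\boldsymbol{\theta}_0)^{-1} - \boldsymbol{J}_\beta(\boldsymbol{\theta}_0)^{-1}\boldsymbol{m}\left(\boldsymbol{m}^T\boldsymbol{J}_\beta(\boldsymbol{\theta}_0)^{-1}\boldsymbol{m}\right)^{-1}\boldsymbol{m}^T\boldsymbol{J}_\beta(\boldsymbol{\theta}_0)^{-1}\right)\sqrt{N}\,\boldsymbol{U}_N(\boldsymbol{\theta}_0) + o_P(1),$$
and the matrix in parentheses is precisely $\boldsymbol{P}_\beta(\boldsymbol{\theta}_0) = \boldsymbol{J}_\beta(\boldsymbol{\theta}_0)^{-1} - \boldsymbol{Q}_\beta(\boldsymbol{\theta}_0)\boldsymbol{m}^T\boldsymbol{J}_\beta(\boldsymbol{\theta}_0)^{-1}$ of (\ref{eq:SigmaPQmatrices}). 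By Slutsky's theorem together with the limit for $\sqrt{N}\,\boldsymbol{U}_N(\boldsymbol{\theta}_0)$ established above, $\sqrt{N}(\widetilde{\boldsymbol{\theta}}^\beta-\boldsymbol{\theta}_0)$ is asymptotically centred normal with covariance $\boldsymbol{P}_\beta(\boldsymbol{\theta}_0)\boldsymbol{K}_\beta(\boldsymbol{\theta}_0)\boldsymbol{P}_\beta(\boldsymbol{\theta}_0)^T$; and since $\boldsymbol{J}_\beta(\boldsymbol{\theta}_0)$ is symmetric, so is $\boldsymbol{P}_\beta(\boldsymbol{\theta}_0)$, so this covariance equals $\Sigma_\beta(\boldsymbol{\theta}_0)$, as claimed. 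I expect the main obstacle to be the bookkeeping in the third paragraph: rigorously bounding the remainder of the Taylor expansion, showing the $\boldsymbol{W}$- and $\boldsymbol{D}_{\boldsymbol{\pi}}^{\beta-1}$-derivative contributions are asymptotically negligible, and verifying $\widetilde{\boldsymbol{\lambda}} = O_P(N^{-1/2})$ — all of which rest on a preliminary consistency statement for the constrained estimator and on uniform control of the second derivatives of $\boldsymbol{\pi}(\boldsymbol{\theta})$ in a neighbourhood of $\boldsymbol{\theta}_0$.
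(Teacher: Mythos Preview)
Your argument is correct and is precisely the content of the results the paper invokes: the paper's own proof is a one-line citation of Theorem~2 in Basu et al.\ (2018) together with Result~3 in Balakrishnan et al.\ (2022), and what you have written is exactly the specialisation of that general constrained-$M$-estimation argument to the present multinomial setting. Indeed, the very same linearisation, block system, and inversion producing $\boldsymbol{P}_\beta$ and $\boldsymbol{Q}_\beta$ appear verbatim later in the paper in the proof of Theorem~\ref{thm:asymprao}, so your route coincides with the authors'.
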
 
	
	The proof follows from Theorem 2 of Basu et al. (2018) and Result 3 of Balakrishnan et al. (2022).

	\section{Influence function of the restricted minimum density power divergence estimator}
	
	The robustness of an estimator is widely analyzed using the concept of Influence Function (IF), first introduced in Hampel et al. (1986). Intuitively, the IF describes the effect of an infinitesimal contamination of the model on the estimate. Therefore, IFs associated to locally	robust  estimators should be bounded. 
	The IF of the  MDPDE for the step-stress ALT model with non-destructive one-shot devices was established in Balakrishnan et al. (2022), and the boundedness of the function was discussing there, concluding that the IF of the  MDPDE is always bounded for positive values of the tuning parameter.
	Here we derive the IF of the restricted MDPDE, $\widetilde{\boldsymbol{\theta}}^\beta,$ defined in Section \ref{sec:modelformulation}. Observe that in this case the functional associated to the restricted estimator must also satisfied the subspace constraint.
	The statistical functional and influence function of the estimators under parametric restrictions have been rigorously studied in Ghosh (2015). Here, we study the IF of the restricted MDPDE when the subspace constraint has a linear form.
	We consider $F_{\theta}$ and the $G$ the assumed and real distribution functions with associated mass functions $\boldsymbol{\pi}(\boldsymbol{\theta})$ and $\boldsymbol{g},$ respectively.
	We define  $\widetilde{\boldsymbol{T}}_\beta$ the functional associated to the restricted MDPDE, $\widetilde{\boldsymbol{\theta}}^\beta,$ computed as the minimizer of the DPD given in (\ref{eq:DPDloss}) between the mass functions $\boldsymbol{\pi}(\boldsymbol{\theta})$ and $\boldsymbol{g}$ subject to the linear constraint $\boldsymbol{m}^T\boldsymbol{\theta} - d = 0.$
	
	For influence function analysis, one could derive the IF expression from the estimating equations of the restricted MDPDE in terms of Lagrange multipliers given in (\ref{eq:estimatingRMDPDE}). However, Ghosh (2015) proposed an alternative approach where the functional $\widetilde{\boldsymbol{T}}_\beta$ associated to the restricted MDPDE is calculated as a solution of the estimating equations of the (unconstrained) MDPDE over the subspace $\Theta_0.$ The existence of such solution is guaranteed by the Implicit Function Theorem.
	Hence, the IF of the restricted MDPDE at the contamination point $\boldsymbol{n}$ and the model distribution with true parameter value $\boldsymbol{\theta}_0$, $F_{\boldsymbol{\theta}_0},$  must simultaneously verify the expression of the IF of the MDPDE stated in Balakrishnan et al. (2022),
	\begin{equation*}
		\text{IF}\left(\boldsymbol{n}, \widetilde{\boldsymbol{T}}_\beta, F_{\boldsymbol{\theta}_0}\right) = \boldsymbol{J}_\beta^{-1}(\boldsymbol{\theta}_0) \boldsymbol{W}^T \boldsymbol{D}_{\boldsymbol{\pi}(\boldsymbol{\theta}_0)}^{\beta-1}\left(-\boldsymbol{\pi}(\boldsymbol{\theta}_0)+\Delta_{\boldsymbol{n}} \right)
	\end{equation*}
	and the subspace constraint
	$\boldsymbol{m}^T \widetilde{\boldsymbol{T}}_\beta - d = 0.$
	Differentiating on the previous subspace constraint, we have that
	$$\boldsymbol{m}^T \text{IF}\left(\boldsymbol{n}, \widetilde{\boldsymbol{T}}_\beta, F_{\boldsymbol{\theta}_0}\right) = 0$$
	and therefore, combining both equations, we get
	\begin{equation*}
		\begin{pmatrix}
			\boldsymbol{J}_\beta(\boldsymbol{\theta}_0)\\
			\boldsymbol{m}^T 
		\end{pmatrix} \text{IF}\left(\boldsymbol{n}, \widetilde{\boldsymbol{T}}_\beta, F_{\boldsymbol{\theta}_0} \right)
		= \begin{pmatrix}
			\boldsymbol{W}^T \boldsymbol{D}_{\boldsymbol{\pi}(\boldsymbol{\theta}_0)}^{\beta-1}\left(-\boldsymbol{\pi}(\boldsymbol{\theta}_0)+\Delta_{\boldsymbol{n}} \right) \\
			0
		\end{pmatrix}.
	\end{equation*}
	Now, multiplying both terms by $ \left(\boldsymbol{J}_\beta(\boldsymbol{\theta})^T, \boldsymbol{m} \right)$ and inverting in both sizes of the equation, the expression of the IF of the restricted MDPDE is given by
	\begin{equation}\label{eq:IFrestricted}
		\text{IF}\left(\boldsymbol{n}, \widetilde{\boldsymbol{T}}_\beta, F_{\boldsymbol{\theta}_0}\right)
		= \left(\boldsymbol{J}_\beta(\boldsymbol{\theta}_0)^T\boldsymbol{J}_\beta(\boldsymbol{\theta}_0) + \boldsymbol{m}\boldsymbol{m}^T \right)^{-1}
		\boldsymbol{J}_\beta(\boldsymbol{\theta}_0)^T
		\boldsymbol{W}^T \boldsymbol{D}_{\boldsymbol{\pi}(\boldsymbol{\theta}_0)}^{\beta-1}\left(-\boldsymbol{\pi}(\boldsymbol{\theta}_0)+\Delta_{\boldsymbol{n}} \right).
	\end{equation}
	Since the matrix $\left(\boldsymbol{J}_\beta(\boldsymbol{\theta}_0)^T\boldsymbol{J}_\beta(\boldsymbol{\theta}_0) + \boldsymbol{m}\boldsymbol{m}^T\right)^{-1}\boldsymbol{J}_\beta(\boldsymbol{\theta}_0)^T$ is typically assumed to be bounded, the robustness of the restricted MDPDE depends only on the boundedness of the IF of the (unrestricted) MDPDE. Therefore, restricted MDPDE are robust for all type of outliers when using positives values of $\beta$, whereas the restricted MLE (corresponding to $\beta=0$) lacks of robustness against stress level or inspection times contamination, i.e., bad leverage points.
	
	\section{Applications of the restricted MDPDE}
	
	An interesting application of the restricted MDPDE are robust testing procedures based on the DPD for testing linear null hypothesis of the form
	\begin{equation}\label{eq:null}
		\operatorname{H}_0: \boldsymbol{m}^T\boldsymbol{\theta} = d.
	\end{equation}
	with $\boldsymbol{m} \in \mathbb{R}^2$ and $d\in \mathbb{R}.$
	In this section, we develop two families of test statistics based on the DPD for testing (\ref{eq:null}) for one-shot devices under the step-stress ALT model, namely Rao-type test statistics and DPD-based tests statistics.
	These two families were studied for general statistical models in Basu et al. (2018).
 	Let consider $\widetilde{\boldsymbol{\theta}}^\beta$ the restricted MDPDE with restricted parameter space defined by the null hypothesis in (\ref{eq:null}),
	$$\Theta_0 = \{\boldsymbol{\theta} | \hspace{0.2cm} \boldsymbol{m}^T\boldsymbol{\theta} = d\}$$
	and recall   $\widehat{\boldsymbol{\theta}}^\beta$ denotes the MDPDE  for $\boldsymbol{\theta}$  computed in all parameter space.
	
	\subsection{Rao-type tests statistics}
	Let us consider the score of the DPD loss function for the step-stress ALT model
	\begin{equation} 
		\label{eq:score}
		\boldsymbol{U}_{\beta,N}(\boldsymbol{\theta}) = \boldsymbol{W}^T\boldsymbol{D}_{\boldsymbol{\pi}(\boldsymbol{\theta})}^{\beta-1}(\widehat{\boldsymbol{p}}-\boldsymbol{\pi}(\boldsymbol{\theta}))
	\end{equation}
	where matrices $\boldsymbol{W}$ and $\boldsymbol{D}_{\boldsymbol{\pi}(\boldsymbol{\theta})}$ are defined in Section \ref{sec:modelformulation}.	That is, the MDPDE verifies the estimating equations given by
	$$	\boldsymbol{U}_{\beta,N}(\widehat{\boldsymbol{\theta}}^\beta) = \boldsymbol{0}$$
	
	We define Rao-type test statistics for testing linear null hypothesis (\ref{eq:null}) as
	\begin{definition}
		The Rao-type statistics, based on the restricted to the linear null hypothesis (\ref{eq:null}) MDPDE, $\widetilde{\boldsymbol{\theta}}^\beta,$ for testing (\ref{eq:null}) is given by
		\begin{equation}\label{eq:raotest}
			\boldsymbol{R}_{\beta, N}(\widetilde{\boldsymbol{\theta}}^\beta) = N\boldsymbol{U}_{\beta,N}(\widetilde{\boldsymbol{\theta}}^\beta)^T \boldsymbol{Q}_{\beta}(\widetilde{\boldsymbol{\theta}}^\beta)\left[ \boldsymbol{Q}_{\beta}(\widetilde{\boldsymbol{\theta}}^\beta)^T \boldsymbol{K}_{\beta}(\widetilde{\boldsymbol{\theta}}^\beta)
			\boldsymbol{Q}_{\beta}(\widetilde{\boldsymbol{\theta}}^\beta)\right]^{-1}
			\boldsymbol{Q}_{\beta}(\widetilde{\boldsymbol{\theta}}^\beta)^T \boldsymbol{U}_{\beta,N}(\widetilde{\boldsymbol{\theta}}^\beta),
		\end{equation}
		where matrices $\boldsymbol{K}_{\beta}(\boldsymbol{\theta})$, $\boldsymbol{Q}_{\beta}(\boldsymbol{\theta})$
		and $\boldsymbol{U}_{\beta,N}(\boldsymbol{\theta}),$  is defined in (\ref{eq:JK}), (\ref{eq:SigmaPQmatrices}) and (\ref{eq:score}), respectively.
	\end{definition}
	Here, the matrix $\boldsymbol{Q}_{\beta}(\boldsymbol{\theta})$ depends on the null hypothesis trough $\boldsymbol{m},$ and the term $d$ is only used to obtain the restricted MDPDE. 
	Moreover, if $\boldsymbol{m} = (0,1)$ (simple null hypothesis) then the restricted estimate of $\theta_1$ must be necessarily $d.$ 
	
	Before presenting the asymptotic distribution of the Rao-type test statistics, $\boldsymbol{R}_{\beta, N}(\widetilde{\boldsymbol{\theta}}^\beta),$ we shall establish the asymptotic distribution of the score  $\boldsymbol{U}_{\beta,N}(\widetilde{\boldsymbol{\theta}}^\beta).$ 
	\begin{theorem}\label{thm:asympscore}
		The asymptotic distribution of the score $\boldsymbol{U}_{\beta,N}(\widetilde{\boldsymbol{\theta}}^\beta)$ for the step-stress ALT model under exponential lifetimes, is given by
		$$\sqrt{N}\boldsymbol{U}_{\beta,N}(\widetilde{\boldsymbol{\theta}}^\beta) \xrightarrow[N\rightarrow \infty]{L} \mathcal{N}\left(\boldsymbol{0}, \boldsymbol{K}_\beta(\boldsymbol{\theta})\right)$$
		where the variance-covariance matrix $\boldsymbol{K}_\beta(\boldsymbol{\theta})$ is defined in (\ref{eq:JK}).
	\end{theorem}
	\begin{proof}
		It is well known that $$\sqrt{N}(\widehat{\boldsymbol{p}}-\boldsymbol{\pi}(\boldsymbol{\theta})) \xrightarrow[N\rightarrow \infty]{L} \mathcal{N}\left(\boldsymbol{0}, \boldsymbol{D}_{\boldsymbol{\pi}(\boldsymbol{\theta})} - \boldsymbol{\pi}(\boldsymbol{\theta}) \boldsymbol{\pi}(\boldsymbol{\theta})^T\right).$$
		since $\widehat{\boldsymbol{p}}$ is the MLE of the multinomial model, and $\boldsymbol{D}_{\boldsymbol{\pi}(\boldsymbol{\theta})} - \boldsymbol{\pi}(\boldsymbol{\theta}) \boldsymbol{\pi}(\boldsymbol{\theta})^T$ is the inverse of the Fisher information matrix of that model.
		Therefore, the score $\sqrt{N}\boldsymbol{U}_{\beta,N}(\boldsymbol{\theta})$ is asymptotically normal with mean vector 
		$$\mathbb{E}\left[ \boldsymbol{U}_{\beta,N}(\boldsymbol{\theta}) \right] =\mathbb{E}\left[ \boldsymbol{W}^T\boldsymbol{D}_{\boldsymbol{\pi}(\boldsymbol{\theta})}^{\beta-1}(\widehat{\boldsymbol{p}}-\boldsymbol{\pi}(\boldsymbol{\theta})) \right] = \boldsymbol{0}$$
		and variance-covariance matrix
		\begin{align*}
			\text{Cov}\left[\boldsymbol{U}_{\beta,N}(\boldsymbol{\theta})\right] &=  \boldsymbol{W}^T\boldsymbol{D}_{\boldsymbol{\pi}(\boldsymbol{\theta})}^{\beta-1}\text{Cov}\left[\widehat{\boldsymbol{p}}-\boldsymbol{\pi}(\boldsymbol{\theta})\right]\boldsymbol{D}_{\boldsymbol{\pi}(\boldsymbol{\theta})}^{\beta-1} \boldsymbol{W}\\
			&= \boldsymbol{W}^T\boldsymbol{D}_{\boldsymbol{\pi}(\boldsymbol{\theta})}^{\beta-1}\left[\boldsymbol{D}_{\boldsymbol{\pi}(\boldsymbol{\theta})} - \boldsymbol{\pi}(\boldsymbol{\theta}) \boldsymbol{\pi}(\boldsymbol{\theta})^T\right]\boldsymbol{D}_{\boldsymbol{\pi}(\boldsymbol{\theta})}^{\beta-1} \boldsymbol{W}\\
			&=  \boldsymbol{W}^T\left[\boldsymbol{D}_{\boldsymbol{\pi}(\boldsymbol{\theta})}^{2\beta-1} - \boldsymbol{D}_{\boldsymbol{\pi}(\boldsymbol{\theta})}^{\beta-1}\boldsymbol{\pi}(\boldsymbol{\theta}) \boldsymbol{\pi}(\boldsymbol{\theta})^T\boldsymbol{D}_{\boldsymbol{\pi}(\boldsymbol{\theta})}^{2\beta-1}\right] \boldsymbol{W}\\
			&=  \boldsymbol{W}^T\left[\boldsymbol{D}_{\boldsymbol{\pi}(\boldsymbol{\theta})}^{2\beta-1} - \boldsymbol{\pi}(\boldsymbol{\theta})^\beta \boldsymbol{\pi}(\boldsymbol{\theta})^{\beta T}\right] \boldsymbol{W}\\
			&= \boldsymbol{K}_\beta(\boldsymbol{\theta}).
		\end{align*}
		
	\end{proof}
	
	Now, the following results states the asymptotic distribution of the Rao-type test statistics
	\begin{theorem}\label{thm:asymprao}
		The asymptotic distribution of the Rao-type test statistics defined in (\ref{eq:raotest}) under the linear null hypothesis (\ref{eq:null}) is a chi-square with 1 degree of freedom.
	\end{theorem}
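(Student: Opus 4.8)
The plan is to reduce the statement to the elementary fact that the square of a univariate Gaussian random variable, divided by its variance, is distributed as $\chi^2_1$, exploiting that a \emph{single} linear restriction forces all the matrices in (\ref{eq:raotest}) to collapse to scalars. First I would record that, under $\operatorname{H}_0$, the restricted estimator is consistent, $\widetilde{\boldsymbol{\theta}}^\beta\xrightarrow{P}\boldsymbol{\theta}_0$: this is the standard behaviour of a minimum divergence M-estimator minimised over the closed subspace $\Theta_0$ and is already built into the asymptotic normality of Theorem 1 (it follows from Basu et al. (2018)). Since $\boldsymbol{\theta}\mapsto\boldsymbol{\pi}(\boldsymbol{\theta})$ and $\boldsymbol{\theta}\mapsto\boldsymbol{W}$ are continuously differentiable and $\boldsymbol{J}_\beta(\boldsymbol{\theta})$ is nonsingular on a neighbourhood of $\boldsymbol{\theta}_0$, the continuous mapping theorem then yields $\boldsymbol{Q}_\beta(\widetilde{\boldsymbol{\theta}}^\beta)\xrightarrow{P}\boldsymbol{Q}_\beta(\boldsymbol{\theta}_0)$ and $\boldsymbol{K}_\beta(\widetilde{\boldsymbol{\theta}}^\beta)\xrightarrow{P}\boldsymbol{K}_\beta(\boldsymbol{\theta}_0)$.

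Next I would invoke Theorem \ref{thm:asympscore}, which gives $\sqrt{N}\boldsymbol{U}_{\beta,N}(\widetilde{\boldsymbol{\theta}}^\beta)\xrightarrow{L}\boldsymbol{Y}\sim\mathcal{N}(\boldsymbol{0},\boldsymbol{K}_\beta(\boldsymbol{\theta}_0))$ under $\operatorname{H}_0$; in fact only the one-dimensional projection $\boldsymbol{Q}_\beta^T\boldsymbol{U}_{\beta,N}$ of the score is needed, which can also be obtained directly from the multinomial central limit theorem for $\sqrt{N}(\widehat{\boldsymbol{p}}-\boldsymbol{\pi}(\boldsymbol{\theta}_0))$ together with a first-order expansion of $\boldsymbol{U}_{\beta,N}$ around $\boldsymbol{\theta}_0$ and the linearization $\sqrt{N}(\widetilde{\boldsymbol{\theta}}^\beta-\boldsymbol{\theta}_0)=\boldsymbol{P}_\beta(\boldsymbol{\theta}_0)\sqrt{N}\boldsymbol{U}_{\beta,N}(\boldsymbol{\theta}_0)+o_P(1)$ that underlies Theorem 1. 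Because $\boldsymbol{m}\in\mathbb{R}^2$ is a single vector, $\boldsymbol{Q}_\beta(\boldsymbol{\theta}_0)=\boldsymbol{J}_\beta(\boldsymbol{\theta}_0)^{-1}\boldsymbol{m}(\boldsymbol{m}^T\boldsymbol{J}_\beta(\boldsymbol{\theta}_0)^{-1}\boldsymbol{m})^{-1}$ is a $2\times1$ column and $\boldsymbol{Q}_\beta(\boldsymbol{\theta}_0)^T\boldsymbol{Y}$ is a scalar; combining this with the displays above and Slutsky's theorem — replacing the random $\boldsymbol{Q}_\beta(\widetilde{\boldsymbol{\theta}}^\beta)$ by its deterministic limit costs only an $o_P(1)$ term since $\sqrt{N}\boldsymbol{U}_{\beta,N}(\widetilde{\boldsymbol{\theta}}^\beta)=O_P(1)$ — I obtain
$$\sqrt{N}\,\boldsymbol{Q}_\beta(\widetilde{\boldsymbol{\theta}}^\beta)^T\boldsymbol{U}_{\beta,N}(\widetilde{\boldsymbol{\theta}}^\beta)\xrightarrow{L}Z\sim\mathcal{N}(0,\sigma_\beta^2),\qquad\sigma_\beta^2:=\boldsymbol{Q}_\beta(\boldsymbol{\theta}_0)^T\boldsymbol{K}_\beta(\boldsymbol{\theta}_0)\boldsymbol{Q}_\beta(\boldsymbol{\theta}_0),$$
with $\sigma_\beta^2>0$ because $\boldsymbol{K}_\beta(\boldsymbol{\theta}_0)$ is positive definite (a standing regularity condition of the model) and $\boldsymbol{Q}_\beta(\boldsymbol{\theta}_0)\neq\boldsymbol{0}$.

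Finally, since the single restriction makes $\boldsymbol{Q}_\beta^T\boldsymbol{U}_{\beta,N}$ and $\boldsymbol{Q}_\beta^T\boldsymbol{K}_\beta\boldsymbol{Q}_\beta$ scalars, the bracketed inverse in (\ref{eq:raotest}) is merely a reciprocal and the statistic takes the form
$$\boldsymbol{R}_{\beta,N}(\widetilde{\boldsymbol{\theta}}^\beta)=\frac{\left(\sqrt{N}\,\boldsymbol{Q}_\beta(\widetilde{\boldsymbol{\theta}}^\beta)^T\boldsymbol{U}_{\beta,N}(\widetilde{\boldsymbol{\theta}}^\beta)\right)^2}{\boldsymbol{Q}_\beta(\widetilde{\boldsymbol{\theta}}^\beta)^T\boldsymbol{K}_\beta(\widetilde{\boldsymbol{\theta}}^\beta)\boldsymbol{Q}_\beta(\widetilde{\boldsymbol{\theta}}^\beta)}.$$
By the continuous mapping theorem the numerator converges in law to $Z^2$, by the convergence established in the first paragraph the denominator converges in probability to $\sigma_\beta^2$, and a last application of Slutsky's theorem gives $\boldsymbol{R}_{\beta,N}(\widetilde{\boldsymbol{\theta}}^\beta)\xrightarrow{L}Z^2/\sigma_\beta^2\sim\chi^2_1$; the single degree of freedom reflects that $\operatorname{rank}\boldsymbol{Q}_\beta=1$, i.e.\ the one scalar constraint in (\ref{eq:null}), in agreement with the general $\chi^2_r$ result of Basu et al. (2018). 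The step I expect to demand the most care is the consistency of $\widetilde{\boldsymbol{\theta}}^\beta$ under $\operatorname{H}_0$ and the attendant in-probability convergence of the plug-in matrices $\boldsymbol{Q}_\beta(\widetilde{\boldsymbol{\theta}}^\beta)$ and $\boldsymbol{K}_\beta(\widetilde{\boldsymbol{\theta}}^\beta)$, together with checking that $\boldsymbol{K}_\beta(\boldsymbol{\theta}_0)$ is positive definite so that $\sigma_\beta^2>0$; everything downstream is then a routine Slutsky and continuous-mapping argument.
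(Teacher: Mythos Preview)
Your proof is correct, but the route differs from the paper's. The paper works through the Lagrange multiplier: from the restricted estimating equation $\boldsymbol{U}_{\beta,N}(\widetilde{\boldsymbol{\theta}}^\beta)=-\boldsymbol{m}\widetilde{\lambda}$ it notes that $\boldsymbol{Q}_\beta^T\boldsymbol{U}_{\beta,N}(\widetilde{\boldsymbol{\theta}}^\beta)=-\widetilde{\lambda}$ (since $\boldsymbol{m}^T\boldsymbol{Q}_\beta=1$), rewrites the Rao statistic as $N\widetilde{\lambda}^2/[\boldsymbol{Q}_\beta^T\boldsymbol{K}_\beta\boldsymbol{Q}_\beta]$, and then obtains the asymptotic law of $\widetilde{\lambda}$ by Taylor-expanding the score, assembling and inverting the $3\times3$ block system in $(\widetilde{\boldsymbol{\theta}}^\beta-\boldsymbol{\theta}_0,\widetilde{\lambda})$. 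You instead bypass the multiplier entirely, freeze the plug-in matrices by consistency, project the score via Theorem~\ref{thm:asympscore} and Slutsky, and finish with $Z^2/\sigma_\beta^2\sim\chi_1^2$. Your argument is more elementary for a single constraint because it exploits from the outset that every object in (\ref{eq:raotest}) is scalar; the paper's block-matrix derivation would carry over unchanged to $r>1$ constraints and delivers the joint asymptotic law of $(\widetilde{\boldsymbol{\theta}}^\beta,\widetilde{\lambda})$ as a by-product.

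One caution worth flagging: the proof supplied for Theorem~\ref{thm:asympscore} in the paper actually only computes moments of $\boldsymbol{U}_{\beta,N}$ at the \emph{fixed} true parameter, and in fact $\sqrt{N}\,\boldsymbol{U}_{\beta,N}(\widetilde{\boldsymbol{\theta}}^\beta)=-\sqrt{N}\,\boldsymbol{m}\widetilde{\lambda}$ lives on a line, so its limiting covariance is the rank-one matrix $\boldsymbol{m}(\boldsymbol{Q}_\beta^T\boldsymbol{K}_\beta\boldsymbol{Q}_\beta)\boldsymbol{m}^T$, not $\boldsymbol{K}_\beta$. Your hedge --- deriving the projection directly from the multinomial CLT together with the Taylor expansion and the linearisation $\sqrt{N}(\widetilde{\boldsymbol{\theta}}^\beta-\boldsymbol{\theta}_0)=\boldsymbol{P}_\beta\sqrt{N}\,\boldsymbol{U}_{\beta,N}(\boldsymbol{\theta}_0)+o_P(1)$ --- is exactly what is needed, and since $\boldsymbol{Q}_\beta^T\boldsymbol{m}=1$ the projected variance $\boldsymbol{Q}_\beta^T\boldsymbol{K}_\beta\boldsymbol{Q}_\beta$ comes out identical either way; that alternative derivation is in fact essentially the paper's own argument stripped of the block-matrix bookkeeping.
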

	\begin{proof}
		The MDPDE restricted to the null hypothesis (\ref{eq:null}), $\widetilde{\boldsymbol{\theta}}^\beta,$ 
		is defined as the minimum of the DPD loss restricted to the condition $\boldsymbol{m}^T\boldsymbol{\theta} =d$. Then, it satisfies the restricted equations
		\begin{equation}\label{eq:restrictedestimating}
			\widetilde{\boldsymbol{U}}_{\beta,N}(\widetilde{\boldsymbol{\theta}}^\beta) + \boldsymbol{m}\widetilde{\boldsymbol{\lambda}} = \boldsymbol{0},
		\end{equation}
		for some vector $\widetilde{\boldsymbol{\lambda}}$ of Lagrangian multipliers. 
		Then, we can write
		$\boldsymbol{U}_{\beta,N}(\widetilde{\boldsymbol{\theta}}^\beta) = -\boldsymbol{m}\widetilde{\boldsymbol{\lambda}}
		$
		and consequently
		$$
		\boldsymbol{U}_{\beta,N}(\widetilde{\boldsymbol{\theta}}^\beta)^T \boldsymbol{Q}_{\beta}(\widetilde{\boldsymbol{\theta}}^\beta) = -\widetilde{\boldsymbol{\lambda}}\boldsymbol{m}^T\boldsymbol{Q}_{\beta}(\widetilde{\boldsymbol{\theta}}^\beta) = - \boldsymbol{\widetilde{\lambda}}
		$$
		where $\boldsymbol{Q}_{\beta}(\widetilde{\boldsymbol{\theta}}^\beta)$ is defined in (\ref{eq:SigmaPQmatrices}). Further, the Rao-type test statistics defined in (\ref{eq:raotest}) can be computed in terms of the vector of Lagrange multipliers as follows,
		\begin{equation*}
			\boldsymbol{R}_{\beta, N}(\boldsymbol{\theta}) = N\boldsymbol{\widetilde{\lambda}}^T\left[ \boldsymbol{Q}_{\beta}(\widetilde{\boldsymbol{\theta}}^\beta)^T \boldsymbol{K}_{\beta}(\widetilde{\boldsymbol{\theta}}^\beta)
			\boldsymbol{Q}_{\beta}(\widetilde{\boldsymbol{\theta}}^\beta)\right]^{-1}
			\boldsymbol{\widetilde{\lambda}}.
		\end{equation*}
		In order to obtain the asymptotic distribution of the Rao-type test statistics, we first derive the asymptotic distribution of the vector of Lagrangian multipliers $\widetilde{\boldsymbol{\lambda}}.$
		We  consider the second order Taylor expansion series of the score function $\boldsymbol{U}_{\beta,N}(\boldsymbol{\theta})$ around the true parameter value $\boldsymbol{\theta}_0,$ 
		$$\boldsymbol{U}_{\beta,N}(\widetilde{\boldsymbol{\theta}}^\beta) = \boldsymbol{U}_{\beta,N}(\boldsymbol{\theta}_0) + \frac{\partial \boldsymbol{U}_{\beta,N}(\boldsymbol{\theta})}{\partial \boldsymbol{\theta}}\bigg|_{\boldsymbol{\theta} = \boldsymbol{\theta}_0}\left(\widetilde{\boldsymbol{\theta}}^\beta- \boldsymbol{\theta}_0\right) + o\left(||\widetilde{\boldsymbol{\theta}}^\beta- \boldsymbol{\theta}_0||^2\boldsymbol{1}_2\right).$$
		On the other hand, since $\widehat{p} \xrightarrow[N\rightarrow \infty]{P} \boldsymbol{\pi}(\boldsymbol{\theta}_0)$, it is not difficult to show that 
		$$\frac{\partial \boldsymbol{U}_{\beta,N}(\boldsymbol{\theta})}{\partial \boldsymbol{\theta}}\bigg|_{\boldsymbol{\theta} = \boldsymbol{\theta}_0} \xrightarrow[N\rightarrow \infty]{P} - \boldsymbol{J}_\beta\left(\boldsymbol{\theta}_0\right),$$
		where the matrix $\boldsymbol{J}_\beta\left(\boldsymbol{\theta}\right)$ is defined in (\ref{eq:JK}). 
		Therefore, we can approximate the score function at the restricted MDPDE by
		$$\boldsymbol{U}_{\beta,N}(\widetilde{\boldsymbol{\theta}}^\beta) = \boldsymbol{U}_{\beta,N}(\boldsymbol{\theta}_0) - \boldsymbol{J}_\beta\left(\boldsymbol{\theta}_0\right) \left(\widetilde{\boldsymbol{\theta}}^\beta- \boldsymbol{\theta}_0\right) + o\left(||\widetilde{\boldsymbol{\theta}}^\beta- \boldsymbol{\theta}_0||^2\boldsymbol{1}_2\right)+o\left( \boldsymbol{1}_2\right).$$
		Based on (\ref{eq:restrictedestimating}), we have that
		$$
		\boldsymbol{U}_{\beta,N}(\boldsymbol{\theta}_0) - \boldsymbol{J}_\beta\left(\boldsymbol{\theta}_0\right) \left(\widetilde{\boldsymbol{\theta}}^\beta- \boldsymbol{\theta}_0\right)  + \boldsymbol{m}\widetilde{\boldsymbol{\lambda}}= o\left( \boldsymbol{1}_2\right)
		$$
		so under the null hypothesis we can write,
		$$\boldsymbol{m}^T\widetilde{\boldsymbol{\theta}}^\beta - d = \boldsymbol{m}^T\left(\widetilde{\boldsymbol{\theta}}^\beta - \boldsymbol{\theta}_0\right)  = 0. $$
		Joining both equations,  we get
		\begin{equation*}
			\begin{pmatrix}
				-\boldsymbol{J}_\beta(\boldsymbol{\theta}_0) & \boldsymbol{m}\\
				\boldsymbol{m}^T & \boldsymbol{0}
			\end{pmatrix} 
			\begin{pmatrix}
				\widetilde{\boldsymbol{\theta}}^\beta - \boldsymbol{\theta}_0\\
				\widetilde{\boldsymbol{\lambda}}
			\end{pmatrix} =
			\begin{pmatrix}
				-\boldsymbol{U}_{\beta,N}(\boldsymbol{\theta}_0)\\
				0
			\end{pmatrix} + 	\begin{pmatrix}
				o(\boldsymbol{1}_2)\\
				0
			\end{pmatrix} 
		\end{equation*}
		and solving the previous equation, we have that
		\begin{equation*}
			\begin{pmatrix}
				\widetilde{\boldsymbol{\theta}}^\beta - \boldsymbol{\theta}_0\\
				\widetilde{\boldsymbol{\lambda}}
			\end{pmatrix} =
			\begin{pmatrix}
				-\boldsymbol{J}_\beta(\boldsymbol{\theta}_0) & \boldsymbol{m}\\
				\boldsymbol{m}^T & \boldsymbol{0}
			\end{pmatrix}^{-1}
			\begin{pmatrix}
				-\boldsymbol{U}_{\beta,N}(\boldsymbol{\theta}_0)\\
				\boldsymbol{0}
			\end{pmatrix}  + 	\begin{pmatrix}
				o(\boldsymbol{1}_2)\\
				0
			\end{pmatrix} .
		\end{equation*}
		Now, computing the inverse matrix
		\begin{equation*}
			\begin{pmatrix}
				-\boldsymbol{J}_\beta(\boldsymbol{\theta}_0) & \boldsymbol{m}\\
				\boldsymbol{m}^T & \boldsymbol{0}
			\end{pmatrix}^{-1} = 
			\begin{pmatrix}
				\boldsymbol{P}_\beta(\boldsymbol{\theta}_0) & \boldsymbol{Q}_\beta(\boldsymbol{\theta}_0)\\
				\boldsymbol{Q}_\beta(\boldsymbol{\theta}_0)^T & \left(\boldsymbol{m}^T\boldsymbol{J}_\beta(\boldsymbol{\theta}_0)^{-1}\boldsymbol{m}\right)^{-1}
			\end{pmatrix}
		\end{equation*} 
		with $\boldsymbol{P}_\beta(\boldsymbol{\theta}_0)$ and $\boldsymbol{Q}_\beta(\boldsymbol{\theta}_0)$ defined in (\ref{eq:SigmaPQmatrices}). But from Theorem \ref{thm:asympscore}, 
		\begin{equation*}
			\begin{pmatrix}
				\sqrt{N} \boldsymbol{U}_{\beta,N}(\boldsymbol{\theta}_0)\\
				\boldsymbol{0}
			\end{pmatrix} \xrightarrow[N\rightarrow \infty]{L} \mathcal{N}\left(\boldsymbol{0}_{3}, \begin{pmatrix}
				\boldsymbol{K}_\beta(\boldsymbol{\theta}_0) & \boldsymbol{0}\\
				\boldsymbol{0}^T & 0
			\end{pmatrix}\right)
		\end{equation*}
		and hence, 
		\begin{equation*}
			\begin{pmatrix}
				\sqrt{N}(\widetilde{\boldsymbol{\theta}}^\beta - \boldsymbol{\theta}_0)\\
				\sqrt{N}\widetilde{\boldsymbol{\lambda}}
			\end{pmatrix} \xrightarrow[N\rightarrow \infty]{L} \mathcal{N}\left(\boldsymbol{0}_{3},
			\boldsymbol{V}_\beta(\boldsymbol{\theta}_0) \right)
		\end{equation*}
		with 
		\begin{equation*}
			\boldsymbol{V}_\beta(\boldsymbol{\theta}_0) = \begin{pmatrix}
				\boldsymbol{P}_\beta(\boldsymbol{\theta}_0) & \boldsymbol{Q}_\beta(\boldsymbol{\theta}_0)\\
				\boldsymbol{Q}_\beta(\boldsymbol{\theta}_0)^T & \left(\boldsymbol{m}^T\boldsymbol{J}_\beta(\boldsymbol{\theta}_0)^{-1}\boldsymbol{m}\right)^{-1}
			\end{pmatrix}
			\begin{pmatrix}
				\boldsymbol{K}_\beta(\boldsymbol{\theta}_0) & \boldsymbol{0}\\
				\boldsymbol{0}^T & 0
			\end{pmatrix}
			\begin{pmatrix}
				\boldsymbol{P}_\beta(\boldsymbol{\theta}_0) & \boldsymbol{Q}_\beta(\boldsymbol{\theta}_0)\\
				\boldsymbol{Q}_\beta(\boldsymbol{\theta}_0)^T & \left(\boldsymbol{m}^T\boldsymbol{J}_\beta(\boldsymbol{\theta}_0)^{-1}\boldsymbol{m}\right)^{-1}
			\end{pmatrix}.
		\end{equation*}
		Thus, the asymptotic distribution of the vector of Lagrangian multipliers is given by
		\begin{equation*}\label{eq:asymplambda}
			\sqrt{N} \widetilde{\boldsymbol{\lambda}}
			\xrightarrow[N\rightarrow \infty]{L} \mathcal{N}\left(0,
			\boldsymbol{Q}_\beta(\boldsymbol{\theta}_0)^T \boldsymbol{K}_\beta(\boldsymbol{\theta}_0)\boldsymbol{Q}_\beta(\boldsymbol{\theta}_0) \right).
		\end{equation*}
		Using the previous convergence and the consistency of the restricted MDPDE, it follows that the asymptotic distribution of the Rao-type test statistics, $\boldsymbol{R}_{\beta, N}(\widetilde{\boldsymbol{\theta}}^\beta),$
		is a chi-square distribution with 1 degree of freedom.
		
	\end{proof}
	
	Based on Theorem \ref{thm:asymprao}, for any $\beta \geq 0$ and $\boldsymbol{m} \in \mathbb{R}^2,$ the critical region with significance level $\alpha$ for the hypothesis test with null hypothesis (\ref{eq:null}) is given by
	\begin{equation}\label{eq:criticalregionRaotest}
		\mathcal{R}_{\alpha} = \{(n_1,...,n_{L+1})  \text{ s.t. } 	\boldsymbol{R}_{\beta, N}(\boldsymbol{\theta}) > \chi^2_{1,\alpha}\}
	\end{equation}
	where $\chi^2_{1,\alpha}$ denotes the lower $\alpha$-quantile of a chi-square with 1 degree of freedom.
	
	\begin{remark}
		We could extent the scope to more general linear null hypothesis,
		$$\operatorname{H}_0: \boldsymbol{M}^T\boldsymbol{\theta} = \boldsymbol{d},$$
		with $\boldsymbol{M}$ a $2\times2$ matrix of range $2$ and $\boldsymbol{d}\in \mathbb{R}^2.$ However, in this case the restricted MDPDE is explicitly determinate by the constraint $\boldsymbol{M}^T\boldsymbol{\theta} = \boldsymbol{d}.$ 
		Consequently, the Rao-type test statistics is completely defined by the null hypothesis and it is independent of the value of $\beta$.
		In particular, if we define $\boldsymbol{M}$ to be the identity matrix, we get the simple null hypothesis 
		$$\operatorname{H}_0: (\theta_0, \theta_1) = (d_0,d_1),$$
		and the associated Rao-type test is given by
		\begin{equation*}
			\boldsymbol{R}_{\beta, N}^\ast(\boldsymbol{d}) = N\boldsymbol{U}_{\beta,N}(\boldsymbol{d})^T  \boldsymbol{K}_{\beta}(\boldsymbol{d})^{-1}
			\boldsymbol{U}_{\beta,N}(\boldsymbol{d}),
		\end{equation*}
		since $\boldsymbol{Q}_{\beta}(\boldsymbol{d})$ is the identity matrix.
		Following similar steps than in Theorem \ref{thm:asymprao}, it is not difficult to establish that
		$$ \boldsymbol{R}_{\beta, N}^\ast(\boldsymbol{d}) \xrightarrow[N \rightarrow \infty]{L} \chi^2_2.$$
		Thus, for any $\beta \geq 0,$  the critical region with significance level $\alpha$ for the  hypothesis test with simple null hypothesis is given by
		\begin{equation*}
			\mathcal{R}_{\alpha} = \{(n_1,...,n_{L+1})  \text{ s.t. } 	\boldsymbol{R}_{\beta, N}^\ast(\boldsymbol{d}) > \chi_{2,\alpha}\},
		\end{equation*}
		where $\chi_{2,\alpha}$ denotes the lower $\alpha$-quantile of a chi-square with 2 degree of freedom.
	\end{remark}

	\subsection{DPD-based statistics}
	
	We develop a class of test statistics based on the DPD between the model evaluated under the null hypothesis and under the whole parameter space, respectively.
	Let $\boldsymbol{\pi}(\widehat{\boldsymbol{\theta}}^\beta)$ and $\boldsymbol{\pi}(\widetilde{\boldsymbol{\theta}}^\beta)$ denote the probability vector of the multinomial model estimated in the whole parameter space and the restricted parameter space, respectively. The DPD for $\tau > 0 $ between these two mass function is given by
	 	\begin{equation}\label{eq:DPDtest}
	 	d_\tau(\widehat{\boldsymbol{\theta}}^\beta, \widetilde{\boldsymbol{\theta}}^\beta) = d_\tau(\boldsymbol{\pi}(\widehat{\boldsymbol{\theta}}^\beta), \boldsymbol{\pi}(\widetilde{\boldsymbol{\theta}}^\beta))
 		= \sum_{i=1}^{L+1}\left(\pi_j(\widehat{\boldsymbol{\theta}}^\beta)^{1+\tau} + \left(1+\frac{1}{\tau}\right)\pi_j(\widehat{\boldsymbol{\theta}}^\beta) \pi_j(\widetilde{\boldsymbol{\theta}}^\beta)^\tau + \frac{1}{\tau}\pi_j(\widetilde{\boldsymbol{\theta}}^\beta)^{\tau+1}\right).
	 	\end{equation}
	 
	 Note that here we should deal with two different tuning parameters controlling the trade-off between robustness and efficiency in the estimation ($\beta$) and in the DPD between the estimated mass functions ($\tau$). Many authors choose $\tau = \beta$ for the seek of simplicity.
	 If the true parameter $\boldsymbol{\theta}_0$ verifies the null hypothesis, then the distance in (\ref{eq:DPDtest}) must near zero. Then, the reject region for the test with null hypothesis (\ref{eq:null}) is given by
	 $$ RC= \{ (n_1,...,n_{L+1}) s.t.\hspace{0.2cm} d_\tau(\widehat{\boldsymbol{\theta}}^\beta, \widetilde{\boldsymbol{\theta}}^\beta)  > k\}$$
	with $k$ a constant computed such that the test has size $\alpha.$
	The exact distribution of the test statistic $d_\tau(\widehat{\boldsymbol{\theta}}^\beta, \widetilde{\boldsymbol{\theta}}^\beta)$ 
	is not easy to get, but using similar arguments as in Jaenada et al. (2022) is it not difficult to establish that
	$$T_\tau(\widehat{\boldsymbol{\theta}}^\beta, \widetilde{\boldsymbol{\theta}}^\beta) = 2n d_\tau(\widehat{\boldsymbol{\theta}}^\beta, \widetilde{\boldsymbol{\theta}}^\beta)$$ is asymptotically distributed, under the null hypothesis, as a linear combination of chi-squared random variables with a degree of freedom.
	Furthermore, asymptotic results under some particular kind of alternative hypothesis, such as contiguous alternative hypothesis, may be also of interest.

\section{Simulation study}\label{sec:simstudy}

We empirically analyze the performance of the restricted MDPDEs for one-shot devices data under the step-stress model with exponential lifetime  distributions, $\widetilde{\boldsymbol{\theta}}^\beta.$ 
Further, we evaluate their robustness properties under different scenarios of  contamination.

For the multinomial model, we should consider ``outlying cells'' rather than  ``outlying devices''. Then, we introduce contamination by  increasing (or decreasing) the probability of failure in (\ref{eq:th.prob}) for (at least) one interval (i.e., one cell).
The probability of failure is switched in such contaminated cells as
\begin{equation} \label{eq:contaminatedprob}
	\tilde{\pi}_j(\boldsymbol{\theta}) =  G_{\boldsymbol{\theta}}(\textit{IT}_j) - G_{\boldsymbol{\tilde{\theta}}}(\textit{IT}_{j-1})
\end{equation}
for some $j=2,...,L$, where $\boldsymbol{\tilde{\theta}} = (\widetilde{\theta}_0,\widetilde{\theta}_1)$ is a contaminated parameter with $\tilde{\theta}_0 \leq \theta_0$  and $\tilde{\theta}_1 \leq \theta_1.$
The resulting probability vector must normalized after introducing contamination.

Let us consider a 2-step stress ALT experiment with $L=11$ inspection times and a total of $N = 180$ one-shot devices.
The devices are tested at two stress levels,  $x_1=35$ and $x_2=45$. The first stress level is maintained from the beginning of the experiment until $\tau_1 = 25,$ and the experiment ends at $\tau_2=70.$
 During the experiment, devices inspection is performed at a grid of inspection times, containing the times of stress change, $\text{IT}=(10,15,20,25,30,35,40,45,50,60,70).$
 At each inspection time, all surviving units under test are examined.
 We set the true value of the parameter $\boldsymbol{\theta}_0=(0.003,0.03),$ and then generate data from the corresponding
 multinomial model described in Section \ref{sec:modelformulation}.
  Additionally, we contaminate the described model by increasing the probability of failure in the third interval following (\ref{eq:contaminatedprob}), with $\widetilde{\boldsymbol{\theta}} = (\varepsilon\theta_0, \theta_1)$ for the first scenario of contamination and $\widetilde{\boldsymbol{\theta}} = (\theta_0, \varepsilon\theta_1)$ for the second scenario of contamination.
Note that in both scenarios, the mean lifetime is decreased for the outlying cell.
Further, we consider the linearly restricted parameter space of the form
\begin{equation*}
	\Theta_0 = \{\theta \in \Theta | \hspace{0.2cm} \theta_1 = d \}
\end{equation*} 
with $d \in \mathbb{R}.$ 
We evaluate the performance of the restricted estimators when the true parameter verifies the subspace constraints as well as when $\boldsymbol{\theta}_0$ does not belong to the restricted subspace $\Theta_0.$
For the first scenario ($\boldsymbol{\theta}_0 \in \Theta_0$) we set $d=0.03$ and for the second one ($\boldsymbol{\theta}_0 \notin \Theta_0$) we set $d=0.027.$ Note that, in the second case, the true parameter is close to the restricted space.

Figure \ref{fig:restricted} shows the mean squared error (MSE) of the restricted MDPDE when the true parameter value satisfies the subspace restrictions over $R=10.000$ repetitions, for the two scenarios of contamination and different values of $\beta.$ Larger values of the tuning parameter produces more robust estimates, although less efficient. From this empirical results, moderately large values of the tuning parameter $\beta$ over 0.4 could produce the best trade-off between efficiency and robustness.
Similarly, Figure  \ref{fig:restrictedd} shows the MSE of the restricted MDPDE for similar scenarios, when the true parameter value does not belong to the restricted parameter space. Similar conclusions are drawn, but the MSE on the estimation increases considerably in this context.

\begin{figure}[htb]
	\begin{subfigure}{0.5\textwidth}
		\includegraphics[scale=0.4]{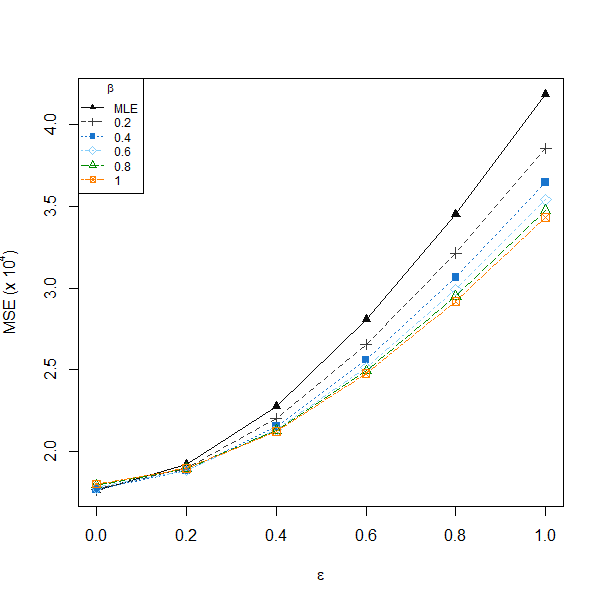} 
		\subcaption{$\theta_0$-contaminated cell}
	\end{subfigure}
	\begin{subfigure}{0.5\textwidth}
		\includegraphics[scale=0.4]{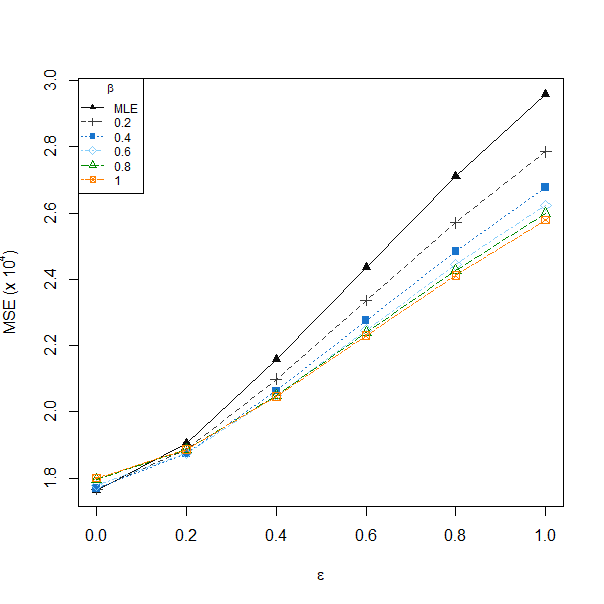}
		\subcaption{$\theta_1$-contaminated cell}
	\end{subfigure}
	\caption{ MSE of the restricted MDPDE when the true parameter belongs to the restricted space over $R=10.000$ repetitions}
\label{fig:restricted}
\end{figure}

\begin{figure}[htb]
	\begin{subfigure}{0.5\textwidth}
		\includegraphics[scale=0.4]{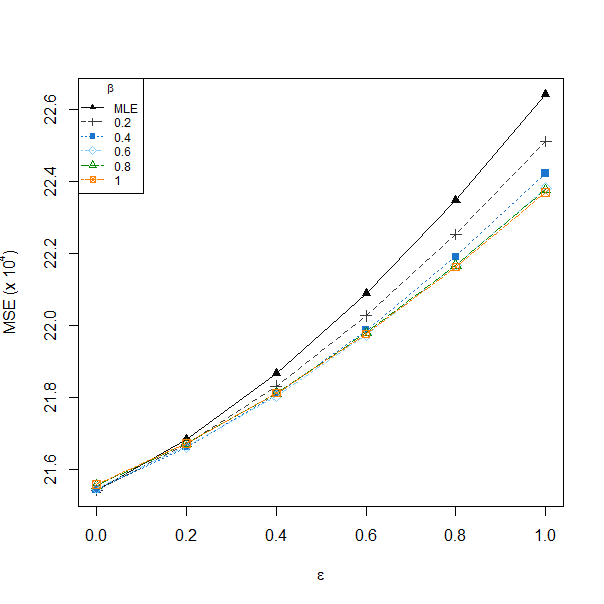} 
		\subcaption{$\theta_0$-contaminated cell}
	\end{subfigure}
	\begin{subfigure}{0.5\textwidth}
		\includegraphics[scale=0.4]{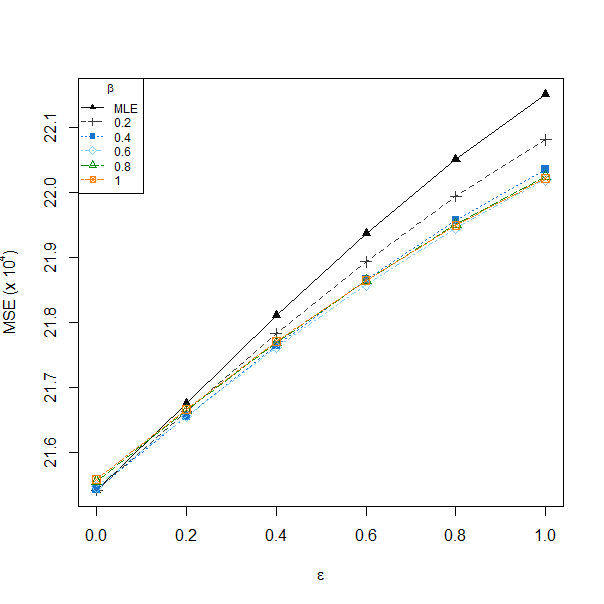}
		\subcaption{$\theta_1$-contaminated cell}
	\end{subfigure}
	\caption{ MSE of the restricted MDPDE when the true parameter does not belong to the restricted space over $R=10.000$ repetitions}
	\label{fig:restrictedd}
\end{figure}

Finally, we examine the performance of the Rao-type test statistics against the sample size $N$ in three contamination scenarios (Figure \ref{fig:raolevelandpowerN}), in the absence of contamination (top), $\theta_0$-contaminated third cell with a $40\%$ reduction of the parameter (middle) and $\theta_1$-contaminated third cell with a $40\%$ reduction of the parameter (bottom). Again, we found a similar performance of the Rao-type test statistics based on the restricted MDPDE with different values of the tuning parameter $\beta,$ whereas positives values of $\beta$ produce more robust statistics when introducing contamination in either of the model parameters, $\theta_0$ and $\theta_1$. Moreover, the lack of robustness of the Rao-type test statistics based on the restricted MLE is highlighted with large sample sizes, in both empirical level and power.

\begin{figure}[htb]
	\begin{subfigure}{0.5\textwidth}
		\includegraphics[height=7cm, width=8cm]{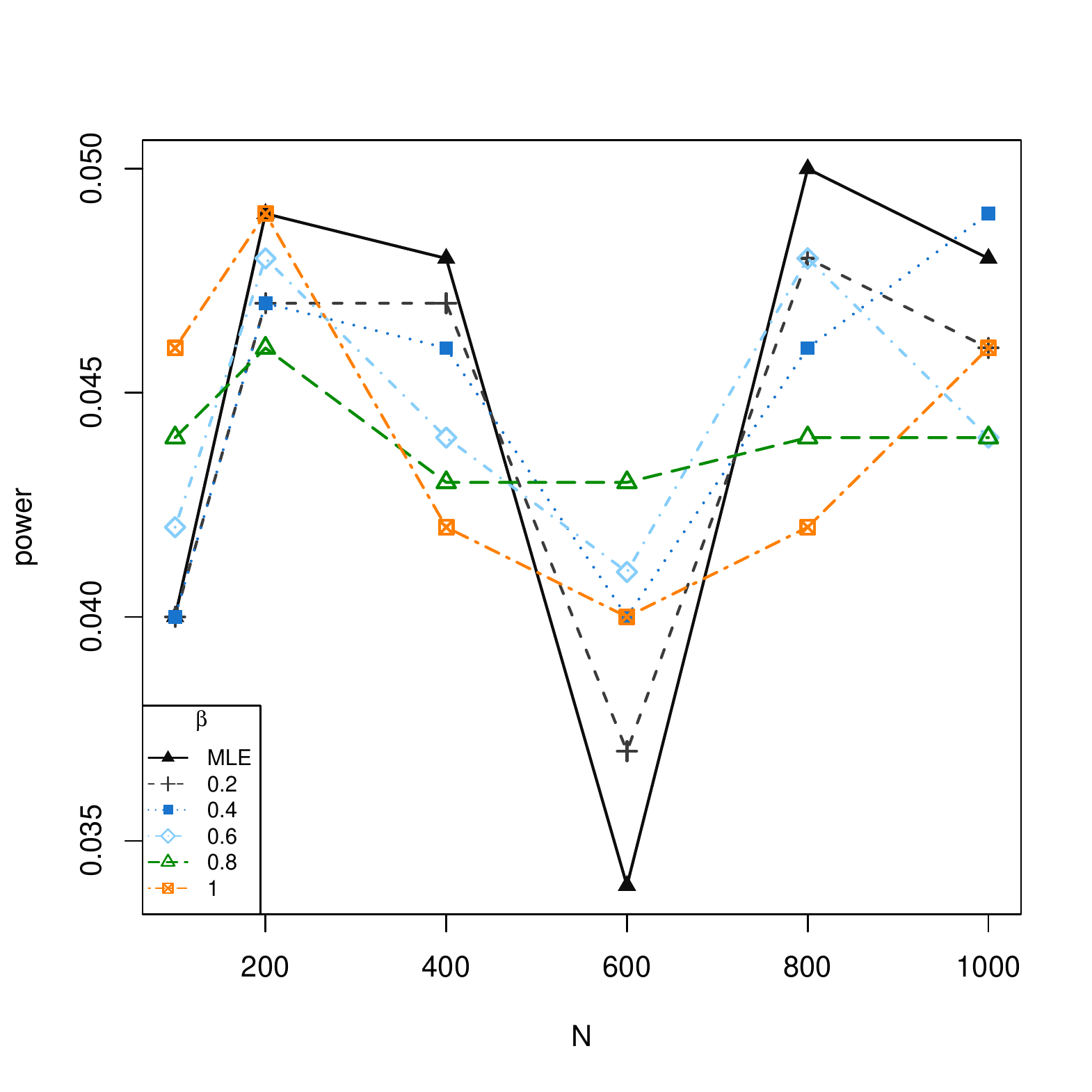}
		\subcaption{Absence of contamination}
	\end{subfigure}
	\begin{subfigure}{0.5\textwidth}
		\includegraphics[height=7cm, width=8cm]{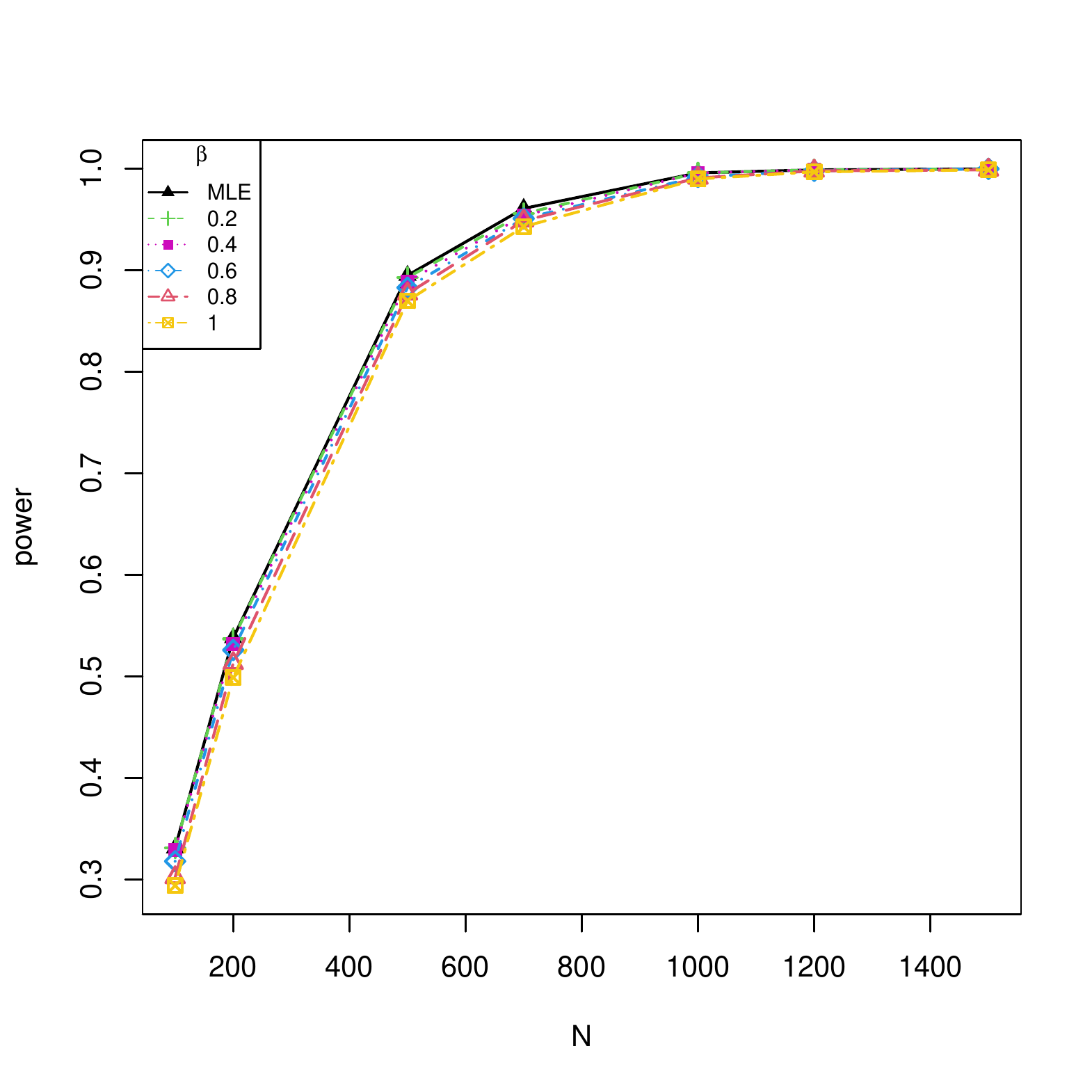}
		\subcaption{Absence of contamination}
	\end{subfigure}
	\begin{subfigure}{0.5\textwidth}
		\includegraphics[height=7cm, width=8cm]{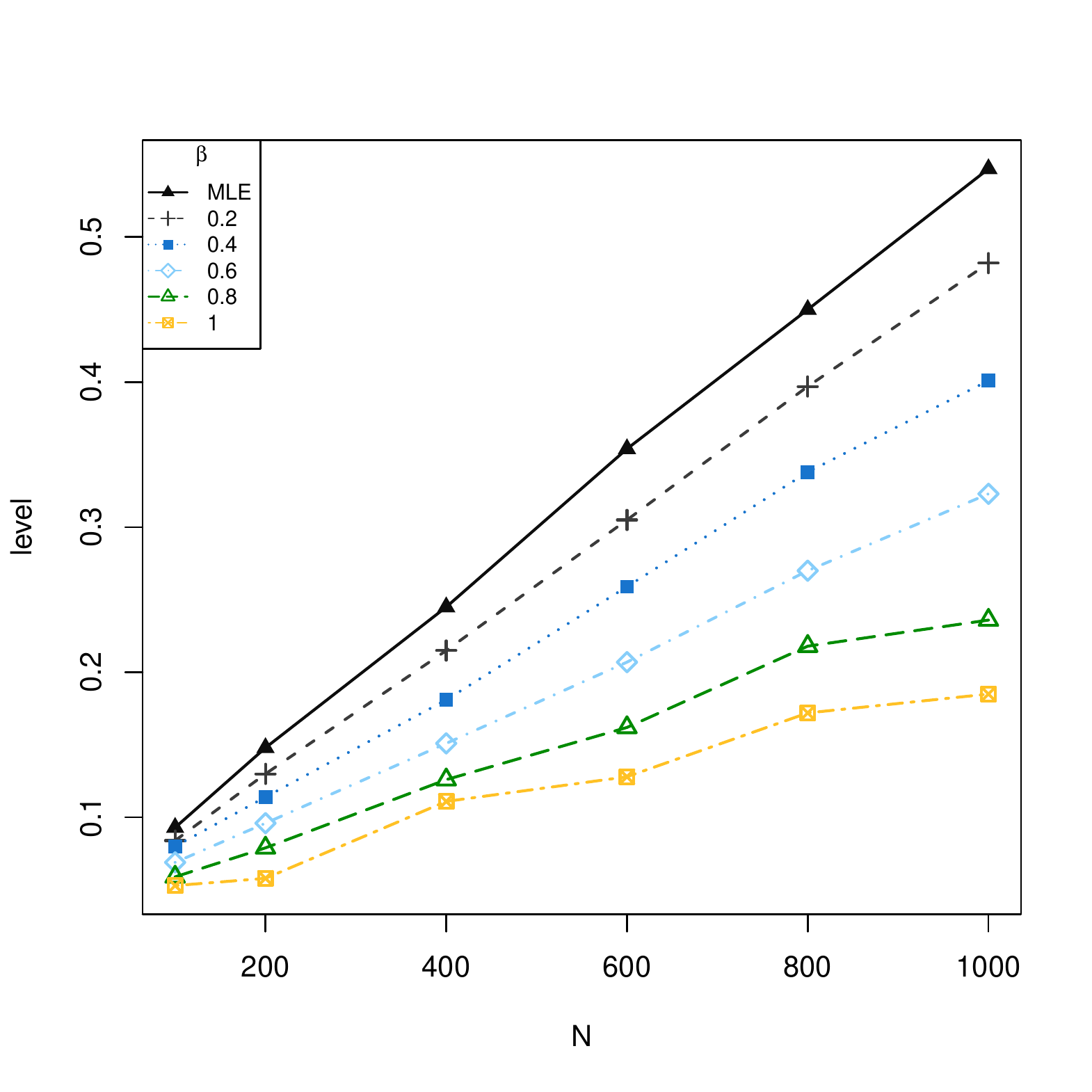}
		\subcaption{$\theta_0$-contaminated cell}
	\end{subfigure}
	\begin{subfigure}{0.5\textwidth}
		\includegraphics[height=7cm, width=8cm]{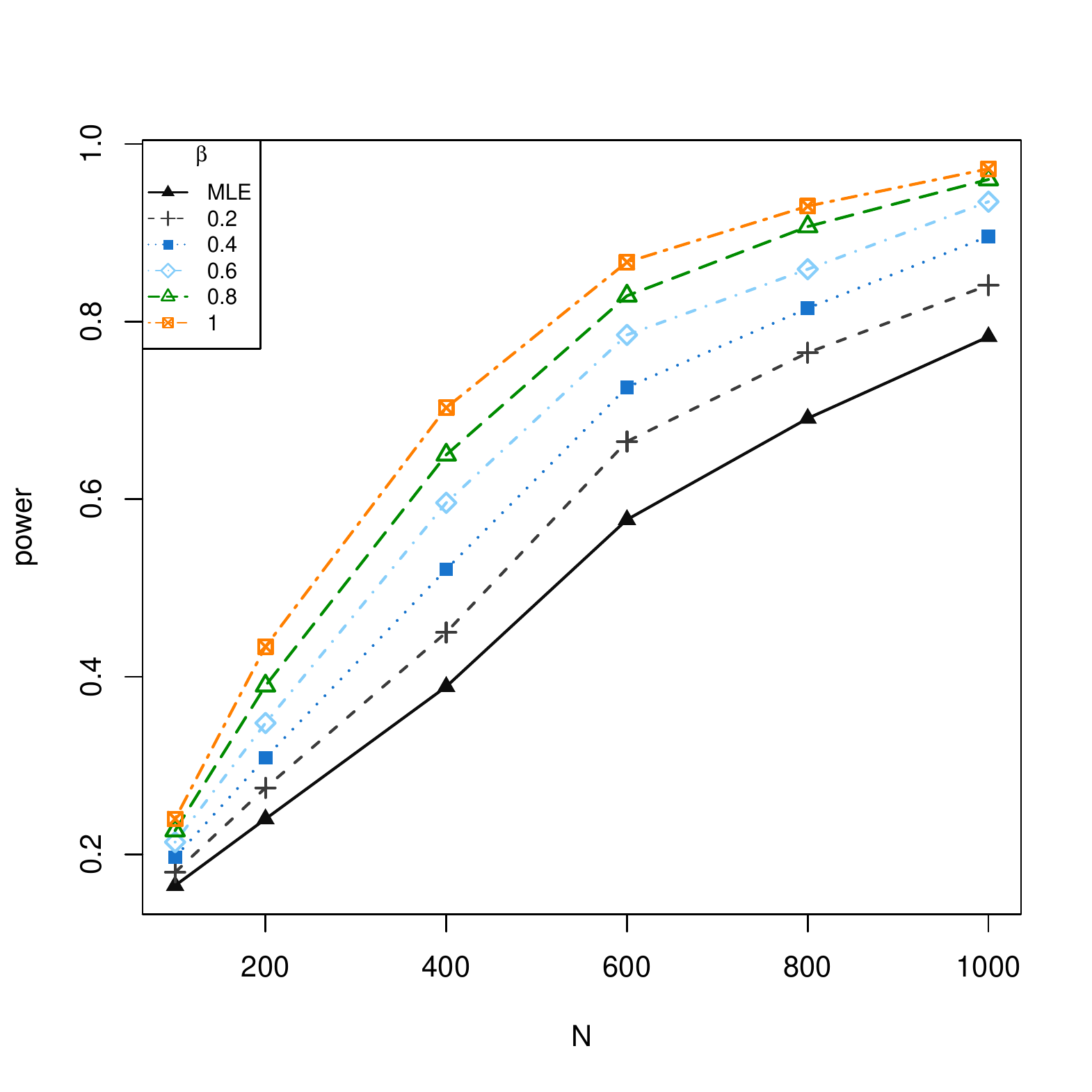}
		\subcaption{$\theta_0$-contaminated cell}
	\end{subfigure}
	\begin{subfigure}{0.5\textwidth}
		\includegraphics[height=7cm, width=8cm]{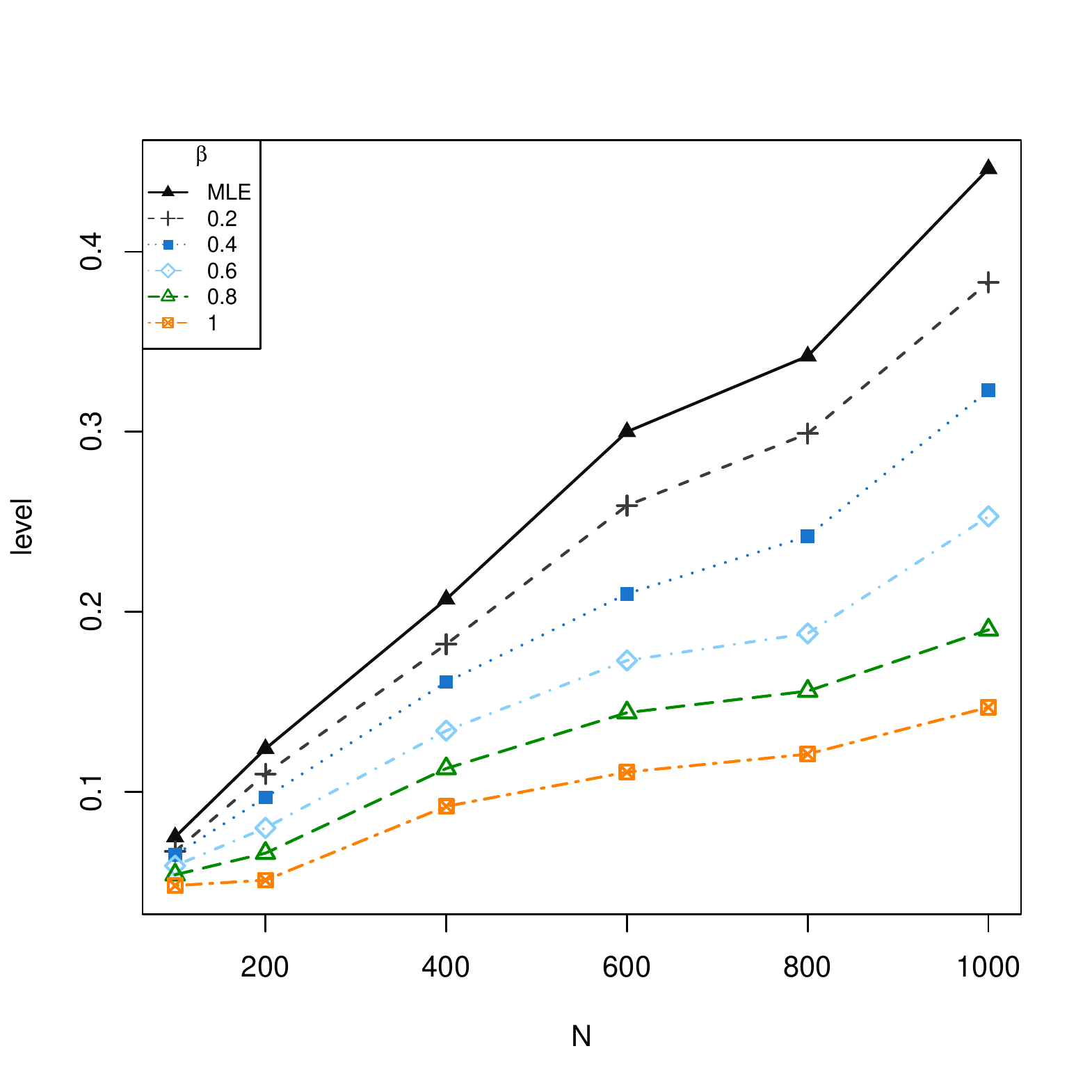}
		\subcaption{$\theta_1$-contaminated cell}
	\end{subfigure}
	\begin{subfigure}{0.5\textwidth}
		\includegraphics[height=7cm, width=8cm]{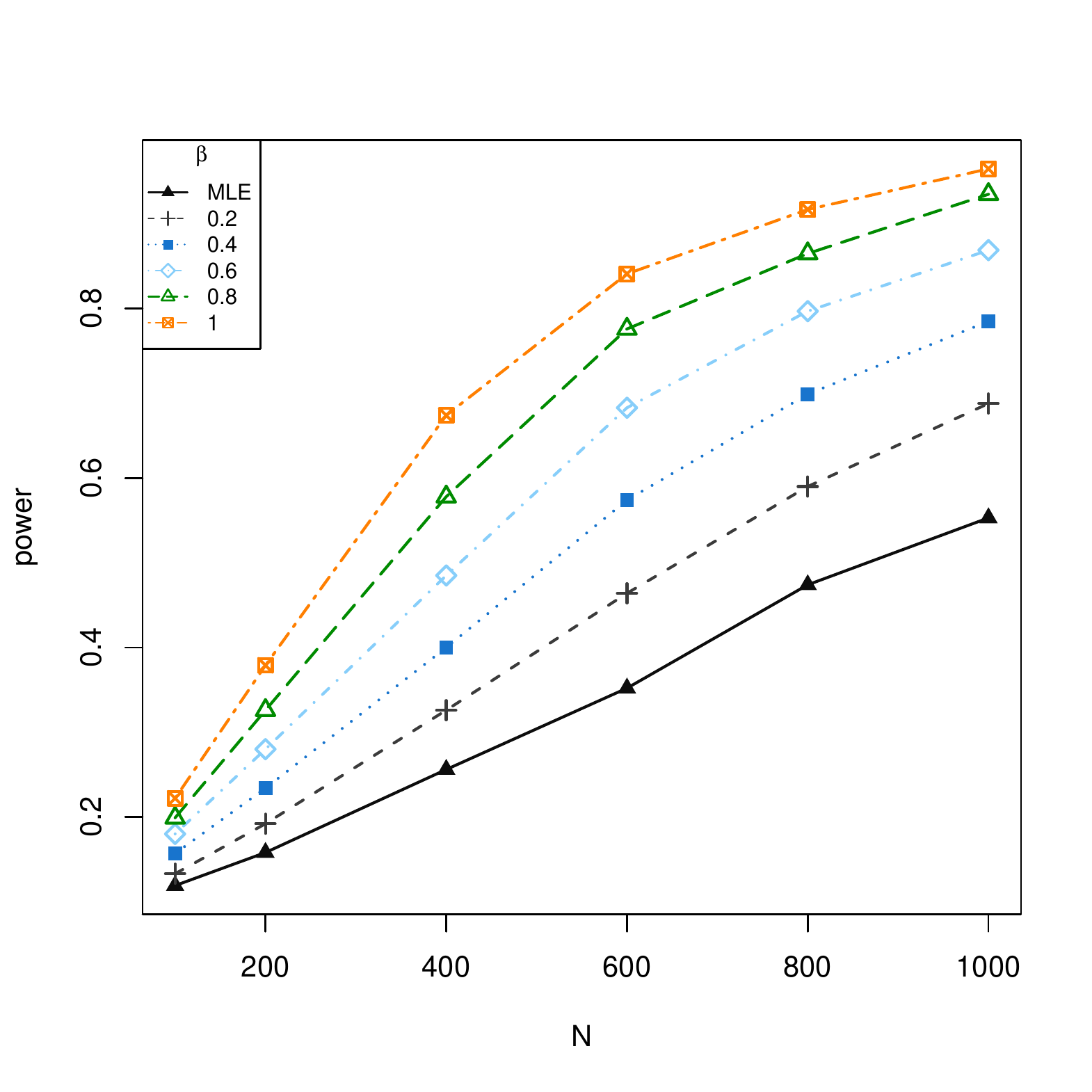}
		\subcaption{$\theta_1$-contaminated cell}
	\end{subfigure}
	\caption{Empirical level and power against sample size with different contaminated scenarios in $R=1000$ replications}
	\label{fig:raolevelandpowerN}
\end{figure}

\section{Conclusions}

In this paper with have developed the restricted minimum MDPDE for one-shot devices data tested under step-stress ALTs. We have derived its asymptotic distribution and analyzed its robustness properties through its IF.
Further, we have defined two families of robust testing procedures based on the restricted DPD estimators, Rao-type test statistics and DPD-based statistics, and we have established the asymptotic distribution for the first one.
Finally, all properties stated theoretically have been illustrated empirically through simulation, showing the certain advantage in terms of robustness of DPD-based inference methods. 

The robustness of the the two proposed families of tests statistics would be interesting to study theoretically through their IF analysis, and furthermore, empirical performance comparison of these two families, together with well-known Wald-type test statistics based on the DPD for one-shot devices tested under step-stress ALT model, will be interesting to be examined in future researches.

\end{document}